\documentclass[12pt, reqno]{amsart}

\author[S.~Cerreia-Vioglio]{Simone Cerreia-Vioglio}
\address{Universit\`a ``Luigi Bocconi''\\Department of Decision Sciences\\Milan, Italy}
\email{simone.cerreia@unibocconi.it}

\author[P.~Leonetti]{Paolo Leonetti}
\address{Universit\`a ``Luigi Bocconi''\\Department of Decision Sciences\\Milan, Italy}
\email{paolo.leonetti@unibocconi.it}

\author[F.~Maccheroni]{Fabio Maccheroni}
\address{Universit\`a ``Luigi Bocconi''\\Department of Decision Sciences\\Milan, Italy}
\email{fabio.maccheroni@unibocconi.it}

\keywords{Measurable functions, lateral completeness, $f$-algebra of $L^0$ type, extended order continuous dual, strictly positive order continuous linear functional.}
\subjclass[2010]{Primary: 16G30; Secondary: 18B35, 46A40.}


\title{A Characterization of the Vector Lattice of \\  Measurable Functions}

\usepackage{amsmath}
\usepackage{amssymb}
\usepackage{amsthm}
\usepackage[left=3.5cm, right=3.5cm, paperheight=11.8in]{geometry}
\usepackage{color}
\usepackage{hyperref}
\usepackage{enumerate}
\usepackage{bm}
\usepackage{comment}
\usepackage{fancyhdr}
\usepackage{nicefrac}
\usepackage{mathrsfs}
\usepackage[inline]{enumitem}

\newtheorem{thm}{Theorem}[section]
\newtheorem{cor}[thm]{Corollary}
\newtheorem{lem}[thm]{Lemma}

\theoremstyle{definition} 

\newtheorem{example}[thm]{Example}
\let\oldexample\example
\renewcommand{\example}{\oldexample\normalfont}
\newcommand{\aaz}{\mathrm{e}}

\theoremstyle{remark}
\newtheorem{claim}{\textsc{Claim}}
\newtheorem*{claim*}{\textsc{Claim}}

\pagestyle{fancy}
\fancyhf{}
\fancyhead[CO]{\textsc{S. Cerreia-Vioglio, P. Leonetti, and F. Maccheroni}}
\fancyhead[CE]{\textsc{A Characterization of the Measurable Functions}}
\fancyhead[RO,LE]{\thepage}

\AtBeginDocument{%
   \def\MR#1{}
}

\providecommand{\MR}[1]{}
\providecommand{\bysame}{\leavevmode\hbox to3em{\hrulefill}\thinspace}
\providecommand{\MR}{\relax\ifhmode\unskip\space\fi MR }

\providecommand{\href}[2]{#2}

%


\hypersetup{
    pdftitle={A Characterization of the Vector Lattice of Measurable Functions},
    pdfauthor={Paolo Leonetti},
    pdfmenubar=false,
    pdffitwindow=true,
    pdfstartview=FitH,
    colorlinks=true,
    linkcolor=blue,
    citecolor=green,
    urlcolor=cyan
}

\uchyph=0

\usepackage[italian,english]{babel}
\usepackage[utf8]{inputenc}

\hyphenation{
  ei-gen-value ei-gen-values ei-gen-prob-blem ei-gen-prob-blems ei-gen-space
}
                                 
\begin{document}

\maketitle
\thispagestyle{empty}

\begin{abstract}
\noindent{} Given a probability measure space $(X,\Sigma,\mu)$, it is well known that the Riesz space $L^0(\mu)$ of equivalence classes of measurable functions $f: X \to \mathbf{R}$ is universally complete and the constant function $\bm{1}$ is a weak order unit. Moreover, the linear functional $L^\infty(\mu)\to \mathbf{R}$ defined by $f \mapsto \int f\,\mathrm{d}\mu$ is strictly positive and order continuous. Here we show, in particular, that the converse holds true, i.e., any universally complete Riesz space $E$ with a weak order unit $e>0$ which admits a strictly positive order continuous linear functional on the principal ideal generated by $e$ is lattice isomorphic onto $L^0(\mu)$, for some probability measure space $(X,\Sigma,\mu)$.
\end{abstract}

\section{Introduction}\label{sec:introduction}

A classical result of Kakutani 
\cite{MR0004095} 
states that every AL-space, that is, every Banach lattice with the norm additive on pairs of positive disjoint vectors, has to be a space $L^1(\mu)=L^1(X,\Sigma,\mu)$ of equivalence classes of $\mu$-integrable functions $f:X \to \mathbf{R}$, where 
$\mu: \Sigma \to [0,\infty]$ is a $\sigma$-additive measure. 
In addition, if there exists a weak order unit, then $\mu$ can be chosen finite. 
This is a 
characterization of the class of integrable functions by properties of the norm and order.

Relying on this result, Masterson \cite{MR0233177} proved a classification for the set of (equivalence classes of) real-valued measurable functions (see Section \ref{subsec:notation} for definitions): 
\begin{thm}\label{th:masterson}
Let $E$ be an Archimedean Riesz space. Then there exists an onto lattice isomorphism $E \to L^0(\mu)$, for some 
$\sigma$-finite measure space $(X,\Sigma,\mu)$, if and only if $E$ is 
universally complete, has the countable sup property, 
and the extended order continuous dual of $E$ is separating on $E$.
\end{thm}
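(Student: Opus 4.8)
I would treat the two directions separately, the necessity being routine and the sufficiency carrying the real content.

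For \emph{necessity}, let $(X,\Sigma,\mu)$ be $\sigma$-finite. The space $L^0(\mu)$ is Dedekind complete and laterally complete, hence universally complete; $\sigma$-finiteness makes the measure algebra $\Sigma/\mathcal N$ satisfy the countable chain condition, which is precisely the countable sup property for $L^0(\mu)$; and the functionals $f\mapsto\int_A f\,\mathrm d\mu$, indexed by the sets $A$ of finite measure, are order continuous on the ideals where they are finite and separate the nonzero elements, so the extended order continuous dual is separating.

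For \emph{sufficiency}, assume $E$ is universally complete, has the countable sup property, and has a separating extended order continuous dual. I would first produce a weak order unit. A maximal disjoint system of positive vectors is countable (the countable sup property forces every disjoint family to have only countably many nonzero members, i.e.\ the band algebra $\mathcal B$ of $E$ satisfies the countable chain condition), and by lateral completeness its supremum $e$ exists; maximality makes $e$ a weak order unit. The principal ideal $E_e$ is then Dedekind complete with order unit $e$, hence an $AM$-space with unit, and by Kakutani's representation $E_e$ is lattice isomorphic to $C(K)$ with $K$ compact Hausdorff; Dedekind completeness forces $K$ to be extremally disconnected, and its clopen algebra is identified with $\mathcal B$.

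The heart of the argument is to distil from the separating family a \emph{single} strictly positive, countably additive measure on $\mathcal B$. Each positive extended order continuous functional restricts to an order continuous functional on $E_e\cong C(K)$, which corresponds to a normal measure on $K$ and therefore to a $\sigma$-additive measure on the clopen algebra $\mathcal B$; the separating hypothesis makes this family of measures jointly strictly positive. Here the countable chain condition is decisive: an exhaustion argument shows that countably many of these functionals already have carriers whose supremum is $e$, so a weighted sum $\mu=\sum_n c_n\nu_n$ with suitable $c_n>0$ is a single strictly positive $\sigma$-additive measure, and a countable exhaustion of $e$ by elements of finite measure shows it is $\sigma$-finite. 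This exhibits $\mathcal B$ as a measure algebra, realized through the Loomis--Sikorski and Stone representations as $\Sigma/\mathcal N$ for a $\sigma$-finite space $(X,\Sigma,\mu)$; consequently $E_e\cong C(K)\cong L^\infty(\mu)$. It then remains to pass from the ideal to all of $E$: since $e$ is a weak order unit, $E_e$ is order dense in the universally complete $E$, so $E$ is the universal completion of $E_e$, and likewise $L^0(\mu)$ is the universal completion of $L^\infty(\mu)$; by uniqueness of universal completions the isomorphism $E_e\cong L^\infty(\mu)$ extends to an onto lattice isomorphism $E\to L^0(\mu)$. \emph{The main obstacle} I anticipate lies in this middle step: extracting a single countably additive, strictly positive measure from a merely separating family of extended-real-valued---hence only partially defined---order continuous functionals. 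The delicate points are that each functional lives only on an order dense ideal, that joint strict positivity must be reduced to a countable subfamily through the countable sup property, and that the $\sigma$-additivity (indeed the $\sigma$-finiteness) of the assembled $\mu$ must be traced back to order continuity rather than to positivity alone.
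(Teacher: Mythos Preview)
The paper does not contain a proof of this statement. Theorem~\ref{th:masterson} is quoted in the introduction as a known result of Masterson \cite{MR0233177}, and the authors explicitly build their own characterization (Theorem~\ref{mainconjecture}) so as to \emph{avoid} the extended order continuous dual. Hence there is nothing in the paper to compare your proposal against.

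That said, your sketch is a reasonable outline of how Masterson's theorem is actually proved, and several of its ingredients reappear in the paper's proof of its own main result. In particular, the paper also represents the principal ideal $E_e$ as an $L^\infty(\mu)$ via a Kakutani-type argument (Lemma~\ref{thm:AAZ}) and then invokes uniqueness of the universal completion to pass from $E_e\cong L^\infty(\mu)$ to $E\cong L^0(\mu)$, just as you do in your final paragraph. The crucial difference is that the paper's Theorem~\ref{mainconjecture} \emph{assumes} the existence of a single strictly positive order continuous functional on $E_e$, so the step you rightly flag as the main obstacle---assembling such a functional from a merely separating family of partially defined extended functionals via the countable sup property---is precisely what the paper sidesteps and what makes Masterson's hypothesis harder to work with. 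Your identification of that step as the delicate one, and your proposed exhaustion argument to reduce to a countable weighted sum, are on the right track; the details are carried out in Luxemburg--Masterson \cite{MR0212540}, which the paper cites for exactly this point.
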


Note that Theorem \ref{th:masterson} involves only order properties. Here, the extended order continuous dual of $E$, usually denoted by $\Gamma(E)$, is the set of equivalence classes of order continuous linear functionals 
defined on order dense ideals of $E$, where two functionals are identified whenever 
they agree on an order dense ideal of $E$, cf. \cite[\S1]{MR0212540}. 
It is well known that $\Gamma(E)$ is separating on $E$ if and only if there exists an order dense ideal $I$ of $E$ such that the order continuous dual of $I$ is separating on $I$, and in that case there exists an order dense ideal which admits a strictly positive order continuous linear functional, see \cite[Theorem 2.5]{MR0212540}. Other equivalent conditions are provided in \cite[Theorem 3.4]{MR1307578}; in particular, if $\Gamma(E)$ is separating on $E$, then there exists a measure space $(X,\Sigma,\mu)$ for which $E$ can be embedded order densely into $L^0(\mu)$.

Related results concerning representations of Archimedean Riesz spaces as spaces of measurable functions can be found, e.g., in Pinsker \cite{MR0021661}, Fremlin \cite{MR0216272}, and Labuda \cite{MR892050}, and are surveyed by Filter \cite[Section 3]{MR1307578}.

The aim of this work is to obtain a concrete characterization of the space of (equivalence classes of) measurable real-valued functions $L^0(X,\Sigma,\mu)$, where $\mu: \Sigma \to \mathbf{R}$ is a \emph{probability} measure, which is analogous to Theorem \ref{th:masterson}, and relies more on algebraic than on order properties (see also \cite[Chapter 36]{MR2459668}). 
Remarkably, this characterization avoids the use of the extended order continuous dual, thus providing an operational criterion to establish when a vector lattice is necessarily a space of random variables, and the proof of our result is self-contained. 

In the recent years there has been a lot of research in $L^0$-modules and their applications. See, for example the works of Cerreia-Vioglio et al. \cite{MR3535749, MR3554767, MR3870592}, Doldi and Frittelli \cite{MR4340168}, Filipovi\'{c} et al. \cite{MR2968040}, Frittelli and Maggis \cite{MR2780777, MR3176684}, and Hoffmann et al. \cite{MR3483745}. 
An abstract characterization of $L^0(\mu)$ extends the scope of these applications to modules that are not \textit{prima facie} on $L^0(\mu)$, such as the modules on algebras of stochastic processes that are sometimes used in mathematical finance (e.g., modules on  the algebras of predictable and progressively measurable processes, see Doob \cite{MR1814344}).

Another advantage of this paper is introducing the possibility of working with ``the scalars'' of $L^0$-modules from a purely algebraic/functional analytic perspective. Dispensing with the ---sometimes cumbersome--- techniques needed to consider zero measure sets, a.s. null functions, and the induced quotient spaces.

Dually, our result delivers a concrete representation for $f$-algebras of $L^0$ type considered in  
\cite{MR3535749, MR3554767}, which was the original motivation for this work (see Section \ref{sec:mainconjecture} below).

\subsection{Notation}\label{subsec:notation} We refer to 
\cite{MR2011364} 
for basic aspects of Riesz spaces. Let $E$ be a Riesz space. Then, we denote the positive cone of a Riesz subspace $F$ by $F^+:=\{x \in F: x\ge 0\}$. 
A net $(x_\alpha)_{\alpha \in A}$ with values in $E$ is said to be order convergent to $x \in E$ if there exists a net $(y_\alpha)_{\alpha \in A}$ with the same index set satisfying $y_\alpha \downarrow 0$ and $|x_\alpha-x|\le y_\alpha$ for all $\alpha \in A$. 
A non-empty subset $S\subseteq E$ is said to be solid if $|y|\le |x|$ implies $y \in S$ whenever $x \in S$. 
The principal ideal generated by a vector $x \in E$, that is, the smallest solid Riesz subspace containing $x$, 
is denoted by $E_x$. 
A vector $e>0$ is called a \emph{strong order unit} if the principal ideal generated by $e$, namely, 
$$
E_e=\{y \in E: |y|\le \lambda e \text{ for some }\lambda \in \mathbf{R}\},
$$
coincides with $E$. Instead, $e$ is said to be a \emph{weak order unit} if, for each $x \in E$, there exists a net $(x_\alpha)_{\alpha \in A}$ with values in $E_e$ which is order convergent to $x$. 

$E$ is said to be \emph{laterally complete} [respectively, \emph{laterally} $\sigma$\emph{-complete}] if the supremum of every disjoint subset [resp., sequence] of $E^+$ exists in $E$. If $E$ is also Dedekind complete, then we say that $E$ is \emph{universally complete}. 
$E$ has the \emph{countable sup property} if for every subset $S$ of $E$ whose supremum exists in $E$, there exists an at most countable subset of $S$ having the same supremum as $S$ in $E$. 
A (not necessarily Hausdorff) topology $\tau$ on a Riesz space $E$ is said to be \emph{locally solid} if $\tau$ has a base at zero consisiting of solid sets.

As usual, a probability measure space $(X,\Sigma,\mu)$ is a non-empty set $X$, together with a $\sigma$-algebra $\Sigma$ of subsets of $X$, and a $\sigma$-additive measure $\mu: \Sigma \to \mathbf{R}$ with $\mu(X)=1$.  
Moreover, $\bm{1}$ stands for the multiplicative unit of $L^0(\mu)$, whenever the underlying measure space is understood. Finally, given an integrable function $f \in L^1(\mu)$, we shorten $\int f \,\mathrm{d}\mu$ with $\mu(f)$.


\section{The characterization}\label{sec:mainconjecture}

We start with a preliminary observation, whose proof is given in Section \ref{sec:mainproof}.
\begin{lem}\label{lemmametricE}
Let $E$ be a Riesz space with weak order unit $e>0$ 
and let $\varphi: E_e \to \mathbf{R}$ be a strictly positive linear functional. Then 
\begin{equation}\label{eq:dvarphi}
d_\varphi: E \times E\to \mathbf{R}: (x,y) \mapsto \varphi(|x-y|\wedge e)
\end{equation}
is an invariant metric and the topology $\tau_\varphi$ generated by $d_\varphi$ is Hausdorff locally solid.
\end{lem}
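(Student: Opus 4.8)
The plan is to verify the metric axioms for $d_\varphi$ one at a time, isolating the single nontrivial point (positive definiteness) where the weak order unit hypothesis is genuinely needed, and then to read off the two topological claims almost for free. Throughout I would lean on three elementary Riesz space facts: the subadditivity $c\wedge(a+b)\le (c\wedge a)+(c\wedge b)$ for $a,b,c\ge 0$; the inequalities $u\wedge(\lambda v)\le \lambda(u\wedge v)$ when $\lambda\ge 1$ and $u\wedge(\lambda v)\le u\wedge v$ when $0\le\lambda\le 1$ (both for $u,v\ge 0$); and the order continuity of the lattice operations together with the fact that order limits preserve weak inequalities. Note at the outset that $|x-y|\wedge e\le e$ lies in $E_e$, so $\varphi$ is defined on it, and that $d_\varphi\ge 0$ because $\varphi$ is positive.

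Symmetry is immediate from $|x-y|=|y-x|$, and translation invariance from $|(x+z)-(y+z)|=|x-y|$. For the triangle inequality I would begin with $|x-z|\le |x-y|+|y-z|$, intersect both sides with $e$, apply the subadditivity inequality to obtain $|x-z|\wedge e\le (|x-y|\wedge e)+(|y-z|\wedge e)$, and then invoke monotonicity and linearity of $\varphi$.

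The hard part is positive definiteness, i.e. $d_\varphi(x,y)=0\Rightarrow x=y$. Setting $z:=|x-y|\ge 0$, strict positivity of $\varphi$ forces $z\wedge e=0$, so the real task is to deduce $z=0$ from $z\wedge e=0$; this is exactly where the weak order unit property enters. Using the net definition, I would choose a net $(z_\alpha)$ in $E_e$ that order converges to $z$, and write $z_\alpha^+\le \lambda_\alpha e$ for suitable scalars $\lambda_\alpha\ge 0$. The two inequalities for $\wedge$ then give $z\wedge z_\alpha^+\le \max\{1,\lambda_\alpha\}\,(z\wedge e)=0$, whence $z\wedge z_\alpha\le z\wedge z_\alpha^+=0$ for every $\alpha$. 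By order continuity of the lattice operations, $z\wedge z_\alpha$ order converges to $z\wedge z=z$, and since each term is $\le 0$ the limit satisfies $z\le 0$; combined with $z\ge 0$ this yields $z=0$, i.e. $x=y$. I expect this to be the main obstacle, precisely because it is the one place where the net formulation of ``weak order unit'' must be converted into the disjointness statement $z\wedge e=0\Rightarrow z=0$.

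Finally, the topological assertions follow quickly. Since $d_\varphi$ is a metric, the topology $\tau_\varphi$ it generates is automatically Hausdorff. For local solidity I would exhibit the origin-centered balls $B_r:=\{x\in E:\varphi(|x|\wedge e)<r\}$, for $r>0$, as a base at zero: by invariance of $d_\varphi$ they form a neighborhood base at $0$, and each $B_r$ is solid because $|y|\le|x|$ implies $|y|\wedge e\le |x|\wedge e$, hence $\varphi(|y|\wedge e)\le \varphi(|x|\wedge e)$, so $x\in B_r$ forces $y\in B_r$.
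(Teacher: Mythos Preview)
Your proof is correct and follows essentially the same approach as the paper's: well-definedness, symmetry, the triangle inequality via the subadditivity $(a+b)\wedge e\le (a\wedge e)+(b\wedge e)$ (which is exactly the reference \cite[Theorem 1.7.(4)]{MR2011364} the paper invokes), invariance, Hausdorffness, and local solidness via solidity of the balls at the origin are handled identically. The one difference is that where the paper simply asserts ``since $e$ is a weak order unit, $|x-y|\wedge e=0$ if and only if $x=y$'' as a known characterization, you derive this implication directly from the paper's net definition of weak order unit; your argument for this step is sound, so you have in fact supplied a detail the paper leaves implicit.
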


Our main result follows.
\begin{thm}\label{mainconjecture}
Let $E$ be a Dedekind complete Riesz space with weak order unit $e>0$. Then the following are equivalent:
\begin{enumerate}[label={\rm (\roman{*})}]
\item\label{item:conj1} There exist a probability measure space $(X,\Sigma,\mu)$ and an onto lattice isomorphism  
$T: E \to L^0(\mu)$ such that $T(e)=\bm{1}$.
\item\label{item:conj2} There exists a strictly positive order continuous linear functional $\varphi: E_e \to \mathbf{R}$ such that the metric $d_\varphi$ is complete.
\item\label{item:conj3} There exists a strictly positive order continuous linear functional  
$\psi: E_e \to \mathbf{R}$ and $E$ is laterally complete.
\end{enumerate}
Moreover, in such case, $E_e$ is lattice isomorphic onto $L^\infty(\mu)$, the metrics $d_\varphi$ and $d_\psi$ are topologically equivalent, i.e., $\tau_\varphi=\tau_\psi$, and $E$ has the countable sup property.
\end{thm}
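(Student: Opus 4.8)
The plan is to prove the cycle \ref{item:conj1} $\Rightarrow$ \ref{item:conj2} $\Rightarrow$ \ref{item:conj3} $\Rightarrow$ \ref{item:conj1} and then to read off the ``moreover'' assertions from the representation produced in the last step. For \ref{item:conj1} $\Rightarrow$ \ref{item:conj2} I would transport everything along $T$. The principal ideal generated by $\bm{1}$ in $L^0(\mu)$ is exactly $L^\infty(\mu)$, so $T$ restricts to a lattice isomorphism $E_e \to L^\infty(\mu)$; setting $\varphi := \mu \circ T$, i.e. $\varphi(x)=\int T x\,\mathrm{d}\mu$, strict positivity is immediate since $\mu$ is a probability measure, and order continuity follows from dominated convergence (reducing nets to sequences by the countable sup property of $L^0(\mu)$). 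Under $E\cong L^0(\mu)$ the metric $d_\varphi$ of \eqref{eq:dvarphi} becomes $(f,g)\mapsto \int (|f-g|\wedge \bm{1})\,\mathrm{d}\mu$, which metrizes convergence in measure, and completeness of $d_\varphi$ is the classical completeness of $L^0(\mu)$ in measure (extract an a.e.\ convergent subsequence from a Cauchy sequence).

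For \ref{item:conj2} $\Rightarrow$ \ref{item:conj3} I take $\psi:=\varphi$ and establish lateral completeness. Given a disjoint family $(x_i)_{i\in I}$ in $E^+$, the elements $x_i\wedge e$ are disjoint and dominated by $e$, so $\sup_i (x_i\wedge e)$ exists by Dedekind completeness, and order continuity of $\varphi$ gives $\sum_i \varphi(x_i\wedge e)=\varphi(\sup_i (x_i\wedge e))\le\varphi(e)<\infty$; strict positivity then forces all but countably many $x_i$ to vanish. Enumerating the nonzero ones, the increasing partial suprema $s_N=\sum_{n\le N}x_n$ satisfy $d_\varphi(s_N,s_M)=\sum_{N<n\le M}\varphi(x_n\wedge e)$ for $M>N$, which is a Cauchy condition; the $d_\varphi$-limit $s$ is the supremum, since the cone is $\tau_\varphi$-closed by Lemma \ref{lemmametricE} (forcing $s\ge s_N$), while the monotonicity of $d_\varphi$ squeezes any upper bound $u$ through $s_N\le s\wedge u\le s$ to give $s\wedge u=s$. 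The same count of nonzero members in a disjoint family is exactly what will later yield the countable sup property.

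The substance of the theorem is \ref{item:conj3} $\Rightarrow$ \ref{item:conj1}, where I first represent $E_e$. Normalizing $\psi(e)=1$, the components of $e$ (the fragments $c$ with $c\wedge(e-c)=0$) form a complete Boolean algebra $\mathcal B$ because $E_e$ is Dedekind complete, and $\mu_0:=\psi|_{\mathcal B}$ is a strictly positive, order continuous — hence countably additive — probability measure on $\mathcal B$. By the standard realization of a probability measure algebra there is a probability space $(X,\Sigma,\mu)$ whose measure algebra is isomorphic to $(\mathcal B,\mu_0)$; sending a component $c$ to the class of the indicator of the corresponding set and extending linearly defines a lattice isometry from the $e$-step elements onto the simple functions. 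By Freudenthal's spectral theorem these are uniformly dense in $E_e$ and in $L^\infty(\mu)$ respectively, so the map extends to an onto lattice isomorphism $E_e\cong L^\infty(\mu)$ carrying $e$ to $\bm{1}$ and $\psi$ to $\int\cdot\,\mathrm{d}\mu$. Finally, since $e$ is a weak order unit $E_e$ is order dense in $E$, and $E$ is universally complete; as $L^0(\mu)$ is the universal completion of $L^\infty(\mu)$, uniqueness of the universal completion lets the isomorphism extend to $T:E\to L^0(\mu)$ with $T(e)=\bm{1}$.

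The main obstacle is this third step: realizing the abstract measure algebra $(\mathcal B,\mu_0)$ as a genuine probability space and checking that the Freudenthal approximation is compatible on both sides so that the isometry is truly onto $L^\infty(\mu)$, together with the order-density and universal-completion bookkeeping needed to pass from $E_e$ to $E$. Once \ref{item:conj1} holds the ``moreover'' clauses are immediate: the isomorphism $E_e\cong L^\infty(\mu)$ was produced en route; $L^0(\mu)$ of a finite measure has the countable sup property; and any strictly positive order continuous functional on $L^\infty(\mu)$ is integration against an a.e.\ strictly positive density in $L^1(\mu)$, so $d_\varphi$ and $d_\psi$ both induce convergence in measure for mutually absolutely continuous finite measures, whence $\tau_\varphi=\tau_\psi$.
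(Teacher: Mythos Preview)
Your argument is correct, but the route through \ref{item:conj3} $\Rightarrow$ \ref{item:conj1} is genuinely different from the paper's. The paper does not prove this implication directly; it instead closes the cycle via \ref{item:conj3} $\Rightarrow$ \ref{item:conj2} $\Rightarrow$ \ref{item:conj1}. For \ref{item:conj3} $\Rightarrow$ \ref{item:conj2} it observes that $\tau_\psi$ is a Fatou topology, invokes Nakano's theorem that order intervals in a Dedekind complete Fatou space are $\tau$-complete, deduces that $E$ sits as an order dense ideal in its metric completion $\widehat{E}$, and then uses uniqueness of the universal completion to force $E=\widehat{E}$. For \ref{item:conj2} $\Rightarrow$ \ref{item:conj1} it represents $E_e\cong L^\infty(\mu)$ via Kakutani's AL-space theorem (Lemma~\ref{thm:AAZ}, using that $x\mapsto\varphi(|x|)$ is an L-norm) and extends $T_e$ to all of $E$ by a direct metric-completion argument based on the isometry $d_\varphi(x,y)=d(T_e x,T_e y)$. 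You instead build the $L^\infty$ representation from the Boolean algebra of components of $e$ via the Loomis--Sikorski/Maharam realization of the measure algebra and Freudenthal's spectral theorem, and then pass from $E_e$ to $E$ by functoriality of universal completions rather than by a metric extension. Both approaches import one heavy external representation theorem (Kakutani vs.\ realization of measure algebras); yours is more purely order-theoretic and skips the separate \ref{item:conj3} $\Rightarrow$ \ref{item:conj2} detour, while the paper's route keeps the metric $d_\varphi$ in the foreground and reads off $\tau_\varphi=\tau_\psi$ from the uniqueness of Hausdorff Fatou topologies on laterally complete spaces \cite[Theorem 7.53]{MR2011364}, whereas your Radon--Nikodym argument for that equality is an equally valid alternative.
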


The implication \ref{item:conj2} $\implies$ \ref{item:conj1} is related to \cite[Theorem 6.4]{MR2122234}, which characterizes norm dense ideals of $L^1(\mu)$. To the best of our knowledge, the equivalence \ref{item:conj1} $\Longleftrightarrow$ \ref{item:conj3} is completely new.

As an immediate consequence of Theorem \ref{mainconjecture}, we obtain a result in the same spirit of Theorem \ref{th:masterson}. Indeed, recalling that $L^0(\mu)$ is universally complete \cite[Theorem 7.73]{MR2011364} and has a weak order unit $\bm{1}$, it follows that (we omit details): 
\begin{cor}\label{cor:mast1}
Let $E$ be an Archimedean Riesz space. Then $E$ is lattice isomorphic onto $L^0(\mu)$, for some probability measure space $(X,\Sigma,\mu)$, if and only if $E$ is universally complete 
\textup(hence, with weak order unit $e>0$\textup) and admits a strictly positive order continuous linear functional on $E_e$. 
\end{cor}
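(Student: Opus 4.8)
The plan is to read off both implications directly from Theorem \ref{mainconjecture}, since the corollary merely repackages its hypotheses; the only point needing independent justification is the parenthetical assertion that universal completeness already entails the existence of a weak order unit. I would begin by recalling that, by definition, $E$ is universally complete exactly when it is simultaneously Dedekind complete and laterally complete. Thus universal completeness furnishes in one stroke the Dedekind completeness assumed throughout Theorem \ref{mainconjecture} together with the lateral completeness appearing in condition \ref{item:conj3}.

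Next I would settle the existence of a weak order unit. Using Zorn's lemma, fix a maximal disjoint system $D \subseteq E^+ \setminus \{0\}$ and put $e := \sup D$, which exists by lateral completeness. Were there an $x > 0$ with $x \wedge e = 0$, then from $0 \le x \wedge d \le x \wedge e = 0$ for every $d \in D$ we would get that $x$ is disjoint from each element of $D$, so $D \cup \{x\}$ would be a strictly larger disjoint system, contradicting maximality. Hence $e$ is a weak order unit, which is exactly the content of the ``(hence, $\ldots$)'' clause.

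For the forward implication, suppose $T : E \to L^0(\mu)$ is an onto lattice isomorphism for a probability space $(X,\Sigma,\mu)$. Since $L^0(\mu)$ is universally complete \cite[Theorem 7.73]{MR2011364} and universal completeness is an order-theoretic property invariant under lattice isomorphism, $E$ is universally complete as well. The key is then the judicious choice of weak order unit $e := T^{-1}(\bm 1)$: the restriction of $T$ is a lattice isomorphism of $E_e$ onto $L^0(\mu)_{\bm 1} = L^\infty(\mu)$, so that $\varphi := \mu \circ (T|_{E_e})$ is a strictly positive order continuous linear functional on $E_e$, inherited from the integral on $L^\infty(\mu)$.

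For the converse, assume $E$ is universally complete and that $E_e$ admits a strictly positive order continuous linear functional for some weak order unit $e > 0$, whose existence was secured above. Then $E$ is Dedekind complete with weak order unit $e$, so Theorem \ref{mainconjecture} is in force, and its condition \ref{item:conj3} holds because $E$ is also laterally complete. The implication \ref{item:conj3} $\Rightarrow$ \ref{item:conj1} then delivers the desired onto lattice isomorphism $E \to L^0(\mu)$. I anticipate no substantial obstacle in the reduction itself: the entire difficulty resides in Theorem \ref{mainconjecture}, and the only delicate point is the choice $e = T^{-1}(\bm 1)$ in the forward direction, for without it the integral functional would in general fail to be finite on all of $E_e$.
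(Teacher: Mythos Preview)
Your proposal is correct and follows exactly the route the paper indicates: the paper itself omits the proof, merely observing that the corollary is an immediate consequence of Theorem~\ref{mainconjecture} together with the universal completeness of $L^0(\mu)$ and the weak order unit $\bm{1}$. Your Zorn argument for the parenthetical ``hence'' and the explicit choice $e = T^{-1}(\bm{1})$ in the forward direction are precisely the natural details to supply.
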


Finally, we obtain a charaterization of $f$-algebras of $L^0$ type, cf. \cite[Definition 6]{MR3554767}. 
In this regard, we recall that an $f$-algebra is a Riesz algebra $E$ 
for which $(a\cdot c) \wedge b=(c\cdot a) \wedge b=0$ for all $a,b,c \ge 0$ such that $a\wedge b=0$. If, in addition, $E$ is Dedekind complete and admits a non-zero multiplicative unit $e$, then it is said to be a Stonean algebra, cf. \cite[Definition 2]{MR3535749}. In such case, the following facts are well known and readily provable:
\begin{enumerate*}[label={\rm (\roman{*})}]
\item The multiplication is commutative, i.e., $a\cdot b=b\cdot a$ for all $a,b \in E$,
\item $x^2:=x \cdot x \ge 0$ for all $x \in E$; in particular, $e>0$, and 
\item $e$ is a weak order unit.
\end{enumerate*} 

Accordingly, a Stonean algebra $E$ is said to be $f$\emph{-algebra of} $L^0$ \emph{type} whenever the principal ideal $E_e$ is an Arens algebra, i.e., a real commutative Banach algebra such that $\|e\|=1$ and $\|a\|^2 \le \|a^2+b^2\|$ for all $a,b \in E_e$, and there exists a strictly positive order continuous linear functional $\varphi$ on $E_e$ such that the metric $d_\varphi$ defined in \eqref{eq:dvarphi} is complete. 
As an application, Theorem \ref{mainconjecture} implies that $f$-algebras of $L^0$ type are (equivalence classes of) spaces of random variables.
\begin{cor}\label{cor:falgebra}
Let $E$ be an Archimedean $f$-algebra with non-zero multiplicative unit. Then $E$ is an $f$-algebra of $L^0$ type if and only if $E$ is lattice and algebra isomorphic onto $L^0(\mu)$, for some probability measure space $(X,\Sigma,\mu)$. 
\end{cor}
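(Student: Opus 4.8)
The plan is to deduce both implications from Theorem~\ref{mainconjecture}, separating the lattice part from the multiplicative part; the only point genuinely beyond Theorem~\ref{mainconjecture} is that the lattice isomorphism it produces is automatically multiplicative. For that I would invoke the classical fact that an Archimedean $f$-algebra carries at most one multiplication compatible with a prescribed unit; equivalently, any Riesz isomorphism between Archimedean $f$-algebras with units that maps unit to unit is an algebra isomorphism (provable through the representation of the algebra inside its orthomorphisms).

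First I would treat the implication that an $f$-algebra of $L^0$ type is a copy of $L^0(\mu)$. Being of $L^0$ type, $E$ is a Stonean algebra, hence Dedekind complete, its multiplicative unit $e$ is a weak order unit with $e>0$, and by definition it carries a strictly positive order continuous $\varphi$ on $E_e$ with $d_\varphi$ complete. This is exactly condition~\ref{item:conj2}, so Theorem~\ref{mainconjecture} supplies a probability space $(X,\Sigma,\mu)$ and an onto lattice isomorphism $T\colon E\to L^0(\mu)$ with $T(e)=\bm 1$. To upgrade $T$ to an algebra isomorphism I would pull the product of $L^0(\mu)$ back along $T$, setting $x\ast y:=T^{-1}(T(x)\cdot T(y))$, and verify that $(E,\ast)$ is again an Archimedean $f$-algebra with unit $e$: bilinearity, commutativity, associativity and the unit law $x\ast e=x$ are immediate, while positivity of $\ast$ and the $f$-algebra identity transfer from $L^0(\mu)$ because $T$ and $T^{-1}$ are lattice isomorphisms. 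The uniqueness of the $f$-algebra multiplication with unit $e$ then forces $\ast$ to equal the original product, i.e.\ $T(x\cdot y)=T(x)\cdot T(y)$, so $T$ is the desired lattice and algebra isomorphism.

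For the converse I would merely transport structure along a given lattice and algebra isomorphism $S\colon E\to L^0(\mu)$. Multiplicativity of $S$ sends the unit $e$ to $\bm 1$, while its being a lattice isomorphism endows $E$ with Dedekind completeness and the weak order unit $e$, so $E$ is Stonean. Restricting $S$ yields a lattice and algebra isomorphism $E_e\to L^0(\mu)_{\bm 1}=L^\infty(\mu)$, and pulling back the supremum norm makes $E_e$ an Arens algebra, the inequality $\|a\|^2\le\|a^2+b^2\|$ reducing to $f^2\le f^2+g^2$ in $L^\infty(\mu)$. Finally $\varphi:=\mu\circ S|_{E_e}$ is strictly positive and order continuous, and since $d_\varphi(x,y)=d_\mu(Sx,Sy)$ the metric space $(E,d_\varphi)$ is isometric to $(L^0(\mu),d_\mu)$; completeness of the latter (convergence in measure on a probability space is complete, or directly by Theorem~\ref{mainconjecture}, \ref{item:conj1}$\Rightarrow$\ref{item:conj2}) gives completeness of $d_\varphi$. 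Thus $E$ is an $f$-algebra of $L^0$ type.

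The main obstacle is precisely the multiplicativity of $T$ in the first implication: Theorem~\ref{mainconjecture} is purely order-theoretic and says nothing about products, so one must recover the algebraic information from scratch. The uniqueness-of-multiplication argument is what I would use to avoid the more delicate alternative of extending multiplicativity from $E_e\cong L^\infty(\mu)$ to all of $E$ by order density, which would additionally require the separate order continuity of the product on $E$. I would therefore isolate the uniqueness statement as the single external ingredient that converts the order isomorphism of Theorem~\ref{mainconjecture} into the sought algebra isomorphism.
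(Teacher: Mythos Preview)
Your proposal is correct and follows essentially the same route as the paper: apply Theorem~\ref{mainconjecture} to obtain a lattice isomorphism $T\colon E\to L^0(\mu)$ with $T(e)=\bm{1}$, pull back the pointwise product along $T$, and invoke the uniqueness of the $f$-algebra multiplication with prescribed unit on an Archimedean Riesz space to conclude that $T$ is multiplicative. The paper dismisses the converse direction in one line, whereas you spell out the verification that $E_e$ is an Arens algebra and that $d_\varphi$ is complete via the isometry with $(L^0(\mu),d)$; this is a welcome expansion but not a different argument.
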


Finally, it is worth noting that the topological equivalence of $d_\varphi$ and $d_\psi$ at the end of Theorem \ref{mainconjecture} cannot be strengthened to strongly equivalence, as it is shown in the following example.
\begin{example}\label{example:notstronglyequivalent}
Let $\mu$ be the function $\mathcal{P}(\mathbf{N}) \to \mathbf{R}: X \mapsto \sum_{x \in X} 2^{-x}$, where $\mathbf{N}$ is the set of positive integers and $\mathcal{P}(\mathbf{N})$ its powerset. 
Then $(\mathbf{N},\mathcal{P}(\mathbf{N}),\mu)$ is a probability measure space, $L^0(\mu)$ is the space of real-valued sequences (indexed by $\mathbf{N}$), and $L^\infty(\mu)$ is the ideal generated by $e=(1,1,\ldots)$, i.e., the subspace of bounded sequences $\ell^\infty$. 
Accordingly, define the strictly positive order continuous linear functionals $\varphi: \ell^\infty\to \mathbf{R}$ and $\psi: \ell^\infty \to \mathbf{R}$ mapping each $x=(x_1,x_2,\ldots)$ into $\sum_{n\ge 1}x_n2^{-n}$ and $\sum_{n\ge 1}x_n3^{-n}$, respectively. 

With this, let us suppose for the sake of contradiction that there exists a positive constant $c$ such that $d_\varphi(x,y) \le cd_\psi(x,y)$ for all $x,y \in L^0(\mu)$. Moreover, for each $n \in \mathbf{N}$, define 
$e_n=(0,\ldots,0,1,1,\ldots)$, where $0$ is repeated exactly $n$ times. Then, it would follow 
$$
\textstyle \sum_{k\ge n}2^{-k}=\varphi(e_n)=d_\varphi(e_n,0) \le cd_\psi(e_n,0)=c\psi(e_n)= c\sum_{k\ge n}3^{-k},
$$
which is false whenever $n$ is sufficiently large.
\end{example}

Proofs of Theorem \ref{mainconjecture} and 
Corollary \ref{cor:falgebra} 
follow in Section \ref{sec:proofs}.

\section{Preliminaries}\label{sec:mainproof}

We start with the proof of Lemma \ref{lemmametricE}.
\begin{proof}[Proof of Lemma \ref{lemmametricE}]
Note that $d_\varphi$ is well defined since $E_e$ is solid and $0\le |x-y|\wedge e \le e \in E_e$ for all $x,y \in E$. Since $e$ is a weak order unit, $|x-y|\wedge e=0$ if and only if $x=y$. Then, the strict positivity of $\varphi$ implies that $d_\varphi(x,y)=d_\varphi(y,x) \ge 0$ for all $x,y \in E$, with equality if and only if $x=y$. Finally, for each $x,y,z \in E$, we have $|x-z| \le |x-y|+|y-z|$, so that, thanks to 
\cite[Theorem 1.7.(4)]{MR2011364}, 
$$
|x-z|\wedge e \le (|x-y|+|y-z|)\wedge e \le |x-y|\wedge e+|y-z|\wedge e.
$$
Since $\varphi$ is a positive operator, we obtain $d_\varphi(x,z) \le d_\varphi(x,y)+d_\varphi(y,z)$. Clearly, $d_\varphi$ is invariant and $(E,\tau_\varphi)$ is Hausdorff. 

Finally, the local solidness follows by the fact each open ball $B$ centered in $0$ and with radius $r>0$ is solid. Indeed, given $x,y \in E$ with $|x|\le |y|$ and $y \in B$, then by the positivity of $\varphi$ we get 
$\varphi(|x|\wedge e) \le \varphi(|y|\wedge e)$, that is, $x \in B$. 
\end{proof}

The following result is classical, hence we omit its proof.
\begin{lem}\label{ordercontinuity}
Let $E,F$ be Riesz spaces and let $T:E\to F$ be an onto lattice isomorphism. Then $T$ is order continuous.
\end{lem}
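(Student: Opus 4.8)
The plan is to read off order continuity directly from the definition of order convergence recalled in Section~\ref{subsec:notation}, after observing that a lattice isomorphism is automatically an order isomorphism in both directions. First I would record two elementary consequences of $T$ being a linear bijection that preserves $\vee$ and $\wedge$: it preserves the modulus, since $T|x| = T(x \vee (-x)) = Tx \vee (-Tx) = |Tx|$ for every $x \in E$; and it preserves the order in both directions, since $x \le y$ is equivalent to $x \vee y = y$, hence to $Tx \vee Ty = Ty$, hence to $Tx \le Ty$, the reverse implication following by applying the same chain to the inverse map $T^{-1}$, which is again a lattice isomorphism because $T$ is onto and injective.

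The heart of the argument is the claim that $T$ sends order-null decreasing nets to order-null decreasing nets, that is, $y_\alpha \downarrow 0$ in $E$ implies $Ty_\alpha \downarrow 0$ in $F$. That $(Ty_\alpha)$ is decreasing is immediate from order preservation, so it remains to check that $\inf_\alpha Ty_\alpha = 0$ in $F$. To this end I would take an arbitrary lower bound $w \in F$ of the net $(Ty_\alpha)$, use surjectivity to write $w = Tz$ for some $z \in E$, and apply the order isomorphism to obtain $z \le y_\alpha$ for all $\alpha$; since $\inf_\alpha y_\alpha = 0$, this forces $z \le 0$ and therefore $w = Tz \le 0$ by positivity. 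As each $Ty_\alpha \ge 0$, the vector $0$ is the greatest lower bound, proving the claim. This is exactly the step where the onto hypothesis is indispensable: a lattice homomorphism need not preserve infinite infima once surjectivity is dropped (for instance, a non-isolated point evaluation on $C[0,1]$ is a lattice homomorphism that fails to be order continuous).

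With these ingredients the conclusion is immediate. Suppose a net $(x_\alpha)$ is order convergent to $x$ in $E$, and let $(y_\alpha)$ be a net with $y_\alpha \downarrow 0$ and $|x_\alpha - x| \le y_\alpha$ for all $\alpha$, as in the definition. Applying $T$ and combining linearity, modulus preservation, and order preservation, I get $|Tx_\alpha - Tx| = T|x_\alpha - x| \le Ty_\alpha$, while the previous paragraph yields $Ty_\alpha \downarrow 0$. By the very definition of order convergence, $(Tx_\alpha)$ is order convergent to $Tx$, so $T$ is order continuous. I do not expect any genuine obstacle, the statement being classical; the only point deserving care is the preservation of existing infima, where surjectivity must be invoked and where one cannot merely cite preservation of finite lattice operations.
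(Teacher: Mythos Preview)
Your argument is correct and self-contained: you correctly observe that a bijective lattice homomorphism is an order isomorphism, and then use surjectivity to transfer the infimum $y_\alpha \downarrow 0$ across, which is indeed the only point requiring care. The paper itself does not give a proof of this lemma at all---it merely states that the result is classical and omits the proof---so your proposal supplies exactly the standard verification that the authors left to the reader.
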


Finally, we will use the following characterization of $L^\infty(\mu)$; cf. also Abramovich, Aliprantis, and Zame \cite[Corollary 2.2]{MR1331491}.

\begin{lem}\label{thm:AAZ} 
Let $E$ be a Dedekind complete 
Riesz space with strong order unit $e>0$ 
which admits a strictly positive order continuous linear functional $\varphi$. 
Then there exist a probability measure space $(X,\Sigma,\mu)$ and an onto lattice isomorphism 
$T: E\to L^\infty(\mu)$ such that $T(e)=\bm{1}$ and $\varphi(x)=\mu(T(x))$ for all $x \in E^+$.
\end{lem}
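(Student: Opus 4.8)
The plan is to run the classical Kakutani--Krein representation and then refine $C(K)$ to an $L^\infty$-space using the measure attached to $\varphi$. We may assume $\varphi(e)=1$ (otherwise rescale $\varphi$, which affects neither strict positivity nor order continuity), so that the normalization below is consistent with $\mu$ being a probability measure. First I would equip $E$ with the order-unit norm $\|x\|_e=\inf\{\lambda>0:|x|\le \lambda e\}$, under which $E$ is an AM-space with unit $e$; since $E$ is Dedekind complete it is uniformly complete, and uniform completeness together with a strong unit yields norm completeness, so $(E,\|\cdot\|_e)$ is a Banach lattice. By the Kakutani--Krein theorem there is an onto lattice isometry $S:E\to C(K)$ with $K$ compact Hausdorff and $S(e)=\bm 1$. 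Because $E$, hence $C(K)$, is Dedekind complete, $K$ is necessarily Stonean (extremally disconnected).

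Next I would transport $\varphi$ to the positive linear functional $\varphi\circ S^{-1}$ on $C(K)$ and apply the Riesz representation theorem to obtain a regular Borel probability measure $\nu$ on $K$ with $\varphi(x)=\int_K (Sx)\,\mathrm d\nu$ for all $x\in E$. The two hypotheses on $\varphi$ now read as two properties of $\nu$: order continuity of $\varphi$ is equivalent to $\nu$ being \emph{normal}, i.e. vanishing on nowhere dense (equivalently meager) Borel sets, while strict positivity of $\varphi$ is equivalent to $\nu$ having \emph{full support}, i.e. assigning positive measure to every nonempty open set.

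Finally I would identify $C(K)$ with $L^\infty(\nu)$. On a Stonean space every bounded Borel function coincides, off a meager set, with a unique continuous function --- this rests on the fact that Borel sets have the Baire property and that in an extremally disconnected space the closure of an open set is clopen. Since $\nu$ is normal, meager sets are $\nu$-null, so the canonical map $J:C(K)\to L^\infty(\nu)$ sending a continuous function to its $\nu$-class is onto; full support of $\nu$ makes $J$ injective, and it is plainly a lattice isomorphism with $J(\bm 1)=\bm 1$ and $\int_K Jf\,\mathrm d\nu=\int_K f\,\mathrm d\nu$. Setting $T:=J\circ S$ and $(X,\Sigma,\mu):=(K,\mathcal B(K),\nu)$ then gives an onto lattice isomorphism $T:E\to L^\infty(\mu)$ with $T(e)=\bm 1$, and $\mu(Tx)=\int_K (Sx)\,\mathrm d\nu=\varphi(x)$ for every $x\in E$.

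I expect the main obstacle to be the two translation steps that make the argument honest rather than formal: proving that order continuity of $\varphi$ corresponds exactly to normality of $\nu$, and proving the Stonean fact that every bounded Borel (equivalently, every $L^\infty$) function has a continuous representative modulo a meager and hence $\nu$-null set, which is precisely what makes $J$ surjective. An alternative, more algebraic route --- perhaps closer to the self-contained spirit of the paper --- would bypass $C(K)$ entirely: the components of $e$ form a complete Boolean algebra $\mathfrak B$ on which $\varphi$ restricts to a strictly positive, completely additive measure; Loomis--Sikorski represents $\mathfrak B$ as a quotient $\Sigma/\mathcal N$ of a $\sigma$-algebra, yielding $(X,\Sigma,\mu)$, and the Freudenthal spectral theorem (uniform approximation of elements of $E_e=E$ by $e$-step functions) upgrades the identification of components with indicator classes to the desired lattice isomorphism $E\cong L^\infty(\mu)$.
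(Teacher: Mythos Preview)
Your argument is correct, but the paper takes a different and shorter route. Instead of viewing $E$ as an AM-space and representing it as $C(K)$, the paper uses $\varphi$ to turn $E$ into an AL-space: after normalizing $\varphi(e)=1$, one sets $\|x\|:=\varphi(|x|)$, which is an order continuous L-norm; the norm completion $\widehat E$ is then an AL-space in which $E$ sits as an order dense ideal, and Kakutani's AL-representation gives an onto lattice isometry $\widehat T:\widehat E\to L^1(\mu)$ with $\widehat T(e)=\bm 1$ and $\varphi=\mu\circ\widehat T$ on $E^+$. Since $E=E_e=\widehat E_e$, restricting $\widehat T$ yields $T:E\to L^\infty(\mu)$. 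The advantage of the paper's approach is that it bypasses entirely the Stonean machinery you flag as the main obstacle (normality of $\nu$, continuous representatives modulo meager sets): Kakutani's AL-theorem delivers $L^1(\mu)$ directly, and the passage to $L^\infty$ is just the ideal restriction $\widehat E_e\mapsto L^1(\mu)_{\bm 1}$. Your AM-route, on the other hand, produces an explicit topological model $(K,\nu)$ and makes the role of order continuity and strict positivity geometrically transparent (normal, full support), at the cost of invoking the Dixmier-type identification $C(K)\cong L^\infty(\nu)$ on extremally disconnected spaces. Your alternative Boolean-algebra sketch via Loomis--Sikorski and Freudenthal is a third genuine route, but also heavier than the paper's two-line reduction to the AL-theorem.
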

\begin{proof}
Since $\varphi$ is strictly positive, then $\varphi(e)>0$. Hence, dividing by $\varphi(e)$, we can suppose without loss of generality that $\varphi(e)=1$. It follows that
$$
\|\cdot\|:E\to \mathbf{R}: x\mapsto \varphi(|x|)
$$
is an order continuous L-norm. Let $\widehat{E}$ be the topological completion of $E$. Then, $\widehat{E}$ is an AL-space and, according to \cite[Footnote 6]{MR1331491}, $E$ is an (order dense) ideal of $\widehat{E}$. It follows by Kakutani's representation theorem \cite[Theorem 7]{MR0004095} that there exists an onto lattice and isometric $\widehat{T}: \widehat{E} \to L^1(\mu)$, for some  probability measure space $(X,\Sigma,\mu)$, such that $\widehat{T}(e)=1$. In particular,
$$
\varphi(x)=\mu(\widehat{T}(x))
$$
for all $x \in E^+$. In addition, since $e$ is unit of $E$ and $E$ is an ideal of $\widehat{E}$, then $E=E_e=\widehat{E}_e$. The claim follows by letting $T$ equal to the restriction of $\widehat{T}$ from $E$ to its direct image.
\end{proof}

\section{Proof of the Main Result}\label{sec:proofs}

\begin{proof}[Proof of Theorem \ref{mainconjecture}] 
We are going to show the following chain of equivalences: 
$$
\ref{item:conj1} \implies \ref{item:conj2} \implies \ref{item:conj3} \implies \ref{item:conj2} \implies \ref{item:conj1}.
$$

\bigskip

\ref{item:conj1}$\implies$\ref{item:conj2}. Let us assume that there exist a probability measure space $(X,\Sigma,\mu)$ and an onto lattice isomorphism $T: E \to L^0(\mu)$ such that $T(e)=\bm{1}$. In particular, $T$ is a positive operator. It follows that $T([-\lambda e,\lambda e])=[-\lambda T(e),\lambda T(e)]$, hence
\begin{equation}\label{eq:linfty}
\textstyle T(E_e)=T\left(\bigcup_{\lambda > 0}[-\lambda e, \lambda e]\right)=\bigcup_{\lambda > 0}[-\lambda \bm{1},\lambda \bm{1}]=L^\infty(\mu).
\end{equation}
Therefore, the restriction of $T$ on $E_e$, hereafter denoted by $T_e$, is a lattice isomorphism onto $L^\infty(\mu)$. Note that, thanks to Lemma \ref{ordercontinuity}, $T_e$ is order continuous.

At this point, define the linear functional  
$$
\varphi: E_e \to \mathbf{R}: x\mapsto \mu(T(x)).
$$
It is routine to check that $\varphi$ is strictly positive. Moreover, $\varphi$ is order continuous. To this aim, since $\varphi$ is a positive operator, it is enough to show that $\varphi(x_\alpha) \downarrow 0$ for every net $(x_\alpha) \downarrow 0$ in $E_e$. 
Since $\mathbf{R}$ is an Archimedean Riesz space with the countable sup property and $\varphi: E_e \to \mathbf{R}$ is strictly positive, it follows by \cite[Theorem 1.45]{MR2011364} that $E_e$ has the countable sup property as well. In particular, it is enough to show that $\varphi(x_n) \downarrow 0$ for every sequence $(x_n) \downarrow 0$ in $E_e$.  Since $T_e$ is order continuous, $T_e(x_n) \downarrow 0$ in $L^\infty(\mu)$. Finally $\varphi(x_n)=\mu(T_e(x_n)) \downarrow 0$ by Lebesgue's dominated convergence theorem.

Finally, we need to prove that the metric space $(E,d_\varphi)$ is (topologically) complete. Let $d$ be the metric of convergence in measure on $L^0(\mu)$, that is, 
$$
d: L^0(\mu)\times L^0(\mu)\to \mathbf{R}: (f,g) \mapsto \mu(|f-g|\wedge \bm{1}).
$$
Hence, for all $x,y \in E$, we obtain
\begin{equation}\label{eq:invariance}
\begin{split}
d_\varphi(x,y)&=\varphi(|x-y|\wedge e)=\mu(T(|x-y|\wedge e))=\mu(T(|x-y|)\wedge T(e))\\
&=\mu(|T(x-y)|\wedge \bm{1})=\mu(|T(x)-T(y)|\wedge \bm{1})=d(T(x),T(y)).
\end{split}
\end{equation}
Then, fix a Cauchy sequence $(x_n)$ of vectors in $E$, i.e., for each $\varepsilon>0$ there exists $n_0=n_0(\varepsilon)$ such that $d_\varphi(x_n,x_m) \le \varepsilon$ whenever $n,m \ge n_0$. It follows from \eqref{eq:invariance} that $(T(x_n))$ is a Cauchy sequence in $(L^0(\mu),d)$. Since the metric space $(L^0(\mu),d)$ is complete, there exists $f \in L^0(\mu)$ such that $d(T(x_n),f)\to 0$ as $n\to +\infty$. Moreover, $T$ is a bijection, hence there exists $x \in E$ such that $T(x)=f$. Therefore, thanks to \eqref{eq:invariance}, we obtain $d_\varphi(x_n,x)\to 0$ as $n\to +\infty$.

\bigskip

\ref{item:conj2}$\implies$\ref{item:conj3}. Suppose that there exists a strictly positive order continuous linear functional $\varphi:E_e\to \mathbf{R}$ for which the metric space $(E,d_\varphi)$ is complete, and set $\varphi=\psi$. 

Since $e$ is a weak order unit and $E$ is Dedekind complete, it follows by \cite[Theorem 7.39]{MR2011364} that it is enough to show that $E$ is laterally $\sigma$-complete. 
To this aim, let $(x_n)$ be a sequence of disjoint vectors in $E^+$ and define the sequences $(y_n)$ by $y_n:=x_n\wedge e$ for each $n\ge 1$. Note that $(y_n)$ is a disjoint sequence of vectors in the order interval $[0,e]$. Moreover, for each positive integer $n$, define
$$
a_n:=x_1+\cdots+x_n \,\,\,\text{ and }\,\,\,b_n:=y_1+\cdots+y_n.
$$
Since $E$ is Dedekind complete and $b_n=y_1 \vee \cdots \vee y_n \le e $ for each $n \ge 1$, then the supremum of the sequence $(b_n)$ exists in $[0,e]$, and we denote it by $b$. Hence $0\le b-b_n \downarrow 0$, which implies $0\le (b-b_n)\wedge e \downarrow 0$. Since $\varphi$ is order continuous, then 
$$
\lim_{n\to \infty}d_\varphi(b_n, b)=0.
$$
In particular, $(b_n)$ is a Cauchy sequence in $(E,d_\varphi)$. In addition, for all positive integers $n,m$ with $n>m$, it holds
\begin{displaymath}
\begin{split}
d_\varphi(a_n,a_m)&=\varphi((a_n-a_m)\wedge e)=\varphi((x_{m+1}\vee\cdots\vee x_n)\wedge e)=\varphi(y_{m+1}\vee\cdots\vee y_n) \\
&=\varphi((y_{m+1}+\cdots+y_n)\wedge e)=\varphi((b_n-b_m)\wedge e)=d_\varphi(b_n,b_m).
\end{split}
\end{displaymath}
It follows that also $(a_n)$ is a Cauchy sequence in $(E,d_\varphi)$. Since $(E,d_\varphi)$ is complete by hypothesis, there exists $a \in E$ such that 
\begin{equation}\label{eq:conv}
\lim_{n\to \infty}d_\varphi(a_n, a)=0.
\end{equation}

Thanks to Lemma \ref{lemmametricE}, $(E,\tau_\varphi)$ is a locally solid Hausdorff Riesz space. Therefore, according to \cite[Theorem 2.21.(c)]{MR2011364} and \eqref{eq:conv}, it follows that $x_1\vee \cdots \vee x_n=a_n \uparrow a$. By the previous argument, this implies that $E$ is laterally complete. 

\bigskip

\ref{item:conj3} $\implies$ \ref{item:conj2}. 
Set $\varphi=\psi$ and note that $\tau:=\tau_\psi$ is a Fatou topology on $E$, i.e., it has a neighborhood base at $0$ consisting of solid and order closed sets. 
Let $(\widehat{E},\hat{\tau})$ be the topological completion of $(E,\tau)$. 

According to a classical result of Nakano, see e.g. \cite[Theorem 4.28]{MR2011364}, since $(E,\tau)$ is a Dedekind complete locally solid Riesz space with the Fatou property, then the order intervals of $E$ are $\tau$-complete. Fix $x \in E$ and $\hat{x} \in \widehat{E}$ such that $0\le \hat{x} \le x$ in $\widehat{E}$ and let $(y_\alpha)$ be a net of positive vectors in $E$ such that $y_\alpha \overset{\tau}{\to} \hat{x}$. This implies that $x_\alpha \overset{\tau}{\to} \hat{x}$, where $x_\alpha:=y_\alpha \wedge x$ for each index $\alpha$. Since $x_\alpha \in [0,x]$ for each $\alpha$ and the order intervals are $\tau$-complete, then $\hat{x} \in E$. Hence $E$ is an ideal of $\widehat{E}$. (An alternative proof of this fact can be found also in \cite[Theorem 2.2]{MR0350372}.)

Moreover, given $0\le \hat{x} \in \widehat{E}$ and a net $(x_\alpha)$ of positive vectors in $E$ such that $x_\alpha \overset{\tau}{\to} \hat{x}$, then $x_\alpha \wedge \hat{x}$ belongs to $E^+$ (since $E$ is an ideal). Hence, considering the finite suprema of the net $(x_\alpha \wedge \hat{x})$, we obtain a net $(y_\beta)$ of vectors in $E^+$ such that $y_\beta \overset{\tau}{\to} \hat{x}$ and $y_\beta \uparrow \hat{x}$. This means that $E$ is an order dense ideal of $\widehat{E}$. 

Therefore, since $E$ is a universally complete order dense Riesz subspace of the Archimedean Riesz space $\widehat{E}$, then $E=\widehat{E}$ by the uniqueness of the universal completion, see e.g. \cite[Theorem 7.15.(ii)]{MR2011364}.

\bigskip

\ref{item:conj2} $\implies$ \ref{item:conj1}. Suppose that an increasing net $(x_\alpha)_{\alpha \in A}$ of positive vectors in $E_e$ is upper bounded by some $y \in E_e$. Then $x:=\sup\{x_\alpha: \alpha \in A\}$ exists in $E$ and belongs to the order interval $[0,y]$. Since $E_e$ is solid, then $x \in E_e$. Therefore, thanks to \cite[Lemma 1.39]{MR2011364}, $E_e$ is a Dedekind complete Riesz subspace with strong order unit $e>0$. It follows by Lemma \ref{thm:AAZ} that there exist a  
probability measure space $(X,\Sigma,\mu)$ and an onto lattice isomorphism 
$$
T_{\aaz}: E_e\to L^\infty(\mu)
$$
such that $T_{\aaz}(e)=\bm{1}$ and $\varphi(x)=\mu(T_{\aaz}(x))$ for all $0\le x \in E_e$. 
Then, for all $x,y \in E_e$ we obtain
\begin{equation}\label{eq:isometry}
\begin{split}
d(T_{\aaz}(x),T_{\aaz}(y))&=\mu(|T_{\aaz}(x)-T_{\aaz}(y)|\wedge \bm{1}) \\
&=\mu( T_{\aaz}(|x-y|)\wedge \bm{1} )=\mu( T_{\aaz}(|x-y| \wedge e) )\\
&=\varphi(|x-y| \wedge e)= d_\varphi(x,y).
\end{split}
\end{equation}

\begin{claim}\label{claim:closure}
$E$ is the topological closure of $E_e$ in $(E,\tau_\varphi)$.
\end{claim}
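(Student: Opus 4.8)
The plan is to read the Claim as a density statement and prove it directly, \emph{without} appealing to the completeness of $d_\varphi$: I will show that each $x \in E$ lies in the $\tau_\varphi$-closure of $E_e$, the reverse inclusion $E_e \subseteq E$ being obvious. The mechanism is that $\tau_\varphi$ is an \emph{order continuous} locally solid topology---beyond the local solidness already granted by Lemma~\ref{lemmametricE}---because $\varphi$ is order continuous by hypothesis~\ref{item:conj2}; consequently order-convergent nets are $\tau_\varphi$-convergent. Since $e$ is a weak order unit, its very definition supplies, for every $x$, a net in $E_e$ order convergent to $x$, and density follows at once.

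Concretely, I would fix $x \in E$ and invoke the weak order unit property to obtain a net $(x_\alpha)_{\alpha\in A}$ in $E_e$ together with a net $(y_\alpha)_{\alpha\in A}$ satisfying $y_\alpha \downarrow 0$ and $|x_\alpha - x| \le y_\alpha$ for all $\alpha$. The first step is to check that $y_\alpha \wedge e \downarrow 0$: this net is decreasing and lies in $[0,e]\subseteq E_e$, and any lower bound $v$ obeys $v \le y_\alpha \wedge e \le y_\alpha$ for every $\alpha$, hence $v \le \inf_\alpha y_\alpha = 0$, while $0$ is plainly a lower bound. The order continuity of $\varphi$, applied to this net inside the ideal $E_e$ on which $\varphi$ is defined, then gives $\varphi(y_\alpha \wedge e)\downarrow 0$. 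Finally, monotonicity of $u \mapsto u \wedge e$ and positivity of $\varphi$ yield
$$
d_\varphi(x_\alpha,x)=\varphi(|x_\alpha-x|\wedge e)\le \varphi(y_\alpha\wedge e)\longrightarrow 0,
$$
so $(x_\alpha)$ converges to $x$ in $\tau_\varphi$. As $x_\alpha \in E_e$ and $x$ was arbitrary, $E$ coincides with the $\tau_\varphi$-closure of $E_e$.

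I expect the proof to be brief, with the only subtle points being verifications rather than ideas: that $\inf_\alpha(y_\alpha\wedge e)=0$, so that order continuity of $\varphi$ is genuinely applicable (the truncated net living in $E_e$, the domain of $\varphi$), and that the domination $|x_\alpha-x|\le y_\alpha$ is preserved under truncation by $e$ and under $\varphi$. A tempting alternative would dispense with nets and approximate each $x\ge 0$ by the increasing sequence $x\wedge ne\in E_e$; but justifying $x\wedge ne\uparrow x$ requires reconciling the net formulation of ``weak order unit'' adopted in this paper with the band-theoretic one, namely $x\wedge e=0\Rightarrow x=0$ for $x\ge 0$, an additional reduction that the net argument above avoids. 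For this reason I would favour the net route.
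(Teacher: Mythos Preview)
Your proof is correct and uses the same underlying mechanism as the paper: approximation from $E_e$ via the weak order unit, combined with the order continuity of $\varphi$ to turn order convergence of the truncated errors into $\tau_\varphi$-convergence.

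The difference lies precisely where you anticipated. The paper takes the route you considered and set aside: it restricts to $x\in E^+$, sets $x_n:=x\wedge ne$, asserts $x_n\uparrow x$ ``by the fact that $e$ is a weak order unit,'' deduces $|x-x_n|\wedge e\downarrow 0$, applies order continuity of $\varphi$, and then passes to general $x$ via $x=x^+-x^-$. Your net argument instead reads the paper's own definition of weak order unit verbatim and avoids the (standard but unproved-in-the-paper) passage from that definition to $x\wedge ne\uparrow x$. What the paper's choice buys is a \emph{sequence} in $E_e$ converging to each $x\ge 0$, which is convenient downstream when the extension $T$ is defined via sequential limits in \eqref{eq:defT}; your net version establishes density just as well but would need a one-line remark (metrizability of $\tau_\varphi$) to extract sequences if they are wanted later.
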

\begin{proof}
Given $x \in E^+$, then $(x_n) \uparrow x$, where $x_n:=x\wedge ne$, by the fact that $e$ is a weak order unit. This implies that $(|x-x_n|\wedge e) \downarrow 0$. Since $\varphi$ is order continuous, then
$$
d_\varphi(x_n,x)=\varphi(|x-x_n|\wedge e) \downarrow 0,
$$
i.e., $x_n \to x$ in $(E,\tau_\varphi)$. 
The claim follows by the fact that $x=x^+-x^-$ for each $x \in E$ and the topological limits are linear.
\end{proof}

\begin{claim}
There exists a positive operator $T:E \to L^0(\mu)$ extending $T_{\aaz}$ for which \eqref{eq:isometry} holds for all $x,y \in E$. 
\end{claim}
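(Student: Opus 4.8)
The plan is to obtain $T$ as the unique continuous (indeed isometric) extension of $T_{\aaz}$ across the dense subspace $E_e$. Combining the isometry \eqref{eq:isometry} with Claim \ref{claim:closure}, I regard $T_{\aaz}$ as an isometry from the dense subset $(E_e, d_\varphi)$ of the complete metric space $(E, d_\varphi)$ into the complete metric space $(L^0(\mu), d)$. Since an isometry is uniformly continuous, the standard extension theorem for uniformly continuous maps into a complete space yields a unique uniformly continuous $T: E \to L^0(\mu)$ agreeing with $T_{\aaz}$ on $E_e$. Concretely, for $x \in E$ I fix a sequence $(x_n)$ in $E_e$ with $x_n \xrightarrow{\tau_\varphi} x$ (such a sequence exists by Claim \ref{claim:closure}, as $\tau_\varphi$ is metrizable), observe that $(T_{\aaz}(x_n))$ is $d$-Cauchy by \eqref{eq:isometry}, and set $T(x) := \lim_n T_{\aaz}(x_n)$, the limit being taken in the complete space $(L^0(\mu), d)$. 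Passing \eqref{eq:isometry} to the limit then gives $d(T(x), T(y)) = d_\varphi(x,y)$ for all $x, y \in E$; in particular, the value $T(x)$ does not depend on the chosen approximating sequence.

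It remains to verify that $T$ is a positive operator. For linearity, I rely on the fact that $\tau_\varphi$ is a (locally solid) linear topology on $E$ by Lemma \ref{lemmametricE}, so that $x_n + y_n \xrightarrow{\tau_\varphi} x + y$ and $\lambda x_n \xrightarrow{\tau_\varphi} \lambda x$ whenever $x_n \xrightarrow{\tau_\varphi} x$ and $y_n \xrightarrow{\tau_\varphi} y$, while addition and scalar multiplication are continuous for convergence in measure on $(L^0(\mu), d)$. Invoking the linearity of $T_{\aaz}$ on $E_e$ and passing to $d$-limits then yields $T(x+y) = T(x) + T(y)$ and $T(\lambda x) = \lambda T(x)$.

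For positivity, I fix $x \in E^+$ and use the specific approximating sequence $x_n := x \wedge ne$ from the proof of Claim \ref{claim:closure}, so that $x_n \in E_e^+$ and $x_n \uparrow x$, hence $x_n \xrightarrow{\tau_\varphi} x$. Since $T_{\aaz}$ is positive, $T(x_n) = T_{\aaz}(x_n) \ge 0$ for every $n$, and $T(x_n) \to T(x)$ in measure; because the positive cone of $L^0(\mu)$ is closed with respect to convergence in measure, I conclude $T(x) \ge 0$. Thus $T$ is the desired positive operator extending $T_{\aaz}$ and satisfying \eqref{eq:isometry} on all of $E$. I expect the only delicate point to be the interchange of limits used to transfer linearity and positivity from $E_e$ to $E$, which rests on the compatibility of $\tau_\varphi$ and of convergence in measure with the vector-space and order operations; the bare existence of $T$ is a routine metric-completion argument.
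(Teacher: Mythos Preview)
Your argument is correct and, in one respect, cleaner than the paper's. Both proofs define $T(x)$ as the $d$-limit of $T_{\aaz}(x_n)$ for an approximating sequence $(x_n)\subseteq E_e$, and both extend the isometry \eqref{eq:isometry} and verify positivity essentially as you do. The genuine difference lies in the well-definedness step. You dispose of it in one line via the isometry: if $x_n,x_n'\to x$ in $(E,\tau_\varphi)$ then $d(T_{\aaz}(x_n),T_{\aaz}(x_n'))=d_\varphi(x_n,x_n')\to 0$, so the two $d$-limits coincide. The paper instead argues that $d_\varphi(x_n,x_n')\to 0$ with $x_n-x_n'\in E_e$ forces $x_n-x_n'\overset{o}{\to}0$ (using Dedekind completeness of $E_e$ and strict positivity of $\varphi$), then invokes order continuity of $T_{\aaz}$ to get pointwise and hence measure convergence of $T_{\aaz}(x_n-x_n')$ to $0$. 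Your route is the standard metric-extension argument and avoids this detour; the paper's route, while longer, makes explicit that $\tau_\varphi$-null sequences in $E_e$ are order-null, a fact of independent interest. Your handling of linearity (continuity of the vector operations for $\tau_\varphi$ and for convergence in measure) and of positivity (approximating $x\ge 0$ by $x\wedge ne\in E_e^+$ and using that $L^0(\mu)^+$ is $d$-closed) is correct and in fact more detailed than the paper's ``routine to check''.
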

\begin{proof}
Define the operator $T:E\to L^0(\mu)$ as the unique extension of
\begin{equation}\label{eq:defT}
E^+\to L^0(\mu): x\mapsto \lim_{n\to \infty}T_{\aaz}(x_n),
\end{equation}
where $(x_n)_{n \ge 1}$ is any sequence in $E_e^+$ such that $x_n \to x$ in $(E,\tau_\varphi)$. The limit in \eqref{eq:defT} is understood to be in $(L^0(\mu),d)$. 

At first, we show that $T$ is well defined. To prove the existence of the limit, fix a sequence $(x_n)$ of vectors in $E_e$ such that $x_n \to x$ (note that such sequence exists by Claim \ref{claim:closure}). Then $(x_n)$ is a Cauchy sequence. It follows by \eqref{eq:isometry} that $(T_{\aaz}(x_n))$ is a Cauchy sequence in $(L^0(\mu),d)$. Then, by the completeness of the latter space, there exists (a unique) $f \in L^0(\mu)$ such that $\lim_{n\to \infty}T_{\aaz}(x_n)=f$.

Then, we show that the limit in \eqref{eq:defT} is independent from the choice of the sequence $(x_n)$. 
Indeed, let us suppose that $(x_n^\prime)$ is another sequence of vectors such that $x_n^\prime \to x$ in $(E,\tau_\varphi)$. This implies that $x_n-x_n^\prime \to 0$, i.e., 
$$
\lim_{n\to \infty}\varphi(|x_n-x_n^\prime|\wedge e)=0.
$$
Since $x_n-x_n^\prime \in E_e$ for each $n$ and $E_e$ is Dedekind complete, there exists $\ell \in E_e$ such that $\ell=\inf\{|x_n-x_n^\prime|:n\ge 1\}$. In particular, there exists a real $\lambda>0$ such that $\ell\le \lambda e$. Clearly, $\ell \ge 0$ and, by the strict positivity of $\varphi$, it follows that $\varphi(|x_n-x_n^\prime|\wedge e) \ge \varphi(\ell \wedge e)$ for all $n$, proving that $\ell \wedge e=0$. Hence $\ell=\ell \wedge \lambda e=0$. By the same argument, it is easy to see that there does not exist any $y>0$ in $E_e$ such that $|x_n-x_n^\prime|\ge y$ for infinitely many $n$. In particular, choosing $y=\nicefrac{1}{k}\,e$, we obtain that $x_n-x_n^\prime$ belongs to the order interval $[-\nicefrac{1}{k}\, e, \nicefrac{1}{k}\, e]$ whenever $n$ is sufficiently large. This implies that $x_n-x_n^\prime$ converges to $0$ with respect to the order, i.e.,
\begin{equation}\label{eq:Twelldef}
x_n-x_n^\prime \overset{o}{\to}0.
\end{equation}
Since $T_{\aaz}$ is a lattice isomorphism onto $L^\infty(\mu)$, then it is also order continuous, thanks to Lemma \ref{ordercontinuity}. Hence 
$
T_{\aaz}(x_n-x_n^\prime) \overset{o}{\to}0
$ 
in $L^\infty(\mu)$, which is equivalent to
$$
\lim_{n\to \infty}T_{\aaz}(x_n-x_n^\prime)(\omega)=0
$$
for each $\omega \in X$. Since it is well known that puntual convergence implies convergence in measure, then
$$
\lim_{n\to \infty} d(T_{\aaz}(x_n-x_n^\prime),0)=0,
$$
which is what we wanted to show. 

In addition, it is routine to check that $T$ is a positive operator.

Finally, for each $x,y \in E$, there exist by Claim \ref{claim:closure} two sequences of vectors $(x_n)$ and $(y_n)$ in $E_e$ such that $x_n \to x$ and $y_n \to y$ in $(E,\tau_\varphi)$. Thanks to Claim \ref{claim:closure} and \eqref{eq:isometry}, we get
\begin{displaymath}
\begin{split}
d_\varphi(x,y)=\lim_{n\to \infty}d_\varphi(x_n,y_n)&=\lim_{n\to \infty}d(T_{\aaz}(x_n),T_{\aaz}(y_n))\\
&=d(\lim_{n\to \infty}T_{\aaz}(x_n),\lim_{n\to \infty}T_{\aaz}(y_n))=d(T(x),T(y))
\end{split}
\end{displaymath}
for all $x,y \in E^+$, hence also for all $x,y \in E$. 
\end{proof} 

\begin{claim}
$T$ is an onto lattice isomorphism.
\end{claim}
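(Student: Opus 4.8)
The plan is to verify, in turn, that $T$ is injective, that it preserves the lattice operations, and that it is surjective; linearity and positivity of $T$, together with the isometry identity $d(T(x),T(y))=d_\varphi(x,y)$ valid for all $x,y\in E$, are already in hand from the preceding claim, and this identity is the workhorse throughout. Injectivity is immediate: since $d$ is a genuine metric on $L^0(\mu)$ and $d_\varphi$ is a metric by Lemma~\ref{lemmametricE}, the identity gives $T(x)=T(y)\Rightarrow d_\varphi(x,y)=0\Rightarrow x=y$. For the lattice-homomorphism property I would prove $T(x^+)=T(x)^+$ for every $x\in E$, which for a linear map is equivalent to preserving all lattice operations, since $T(x\vee y)=T(y)+T((x-y)^+)=T(y)+(T(x)-T(y))^+=T(x)\vee T(y)$.

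To establish $T(x^+)=T(x)^+$, fix $x\in E$ and choose, via Claim~\ref{claim:closure}, a sequence $(x_n)$ in $E_e$ with $x_n\to x$ in $\tau_\varphi$. The elementary inequality $|a^+-b^+|\le|a-b|$ shows that the positive-part map is nonexpansive for both $d_\varphi$ and $d$; hence $x_n^+\to x^+$ in $\tau_\varphi$. Because $T$ is an isometry it is $d_\varphi$--$d$ continuous, so $T_{\aaz}(x_n)=T(x_n)\to T(x)$ and therefore $T_{\aaz}(x_n)^+\to T(x)^+$ in $(L^0(\mu),d)$. On the other hand $x_n^+\in E_e$, so $T(x_n^+)=T_{\aaz}(x_n^+)=T_{\aaz}(x_n)^+$, while by continuity of $T$ the left-hand side converges to $T(x^+)$; comparing the two limits yields $T(x^+)=T(x)^+$.

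Surjectivity is the crux, and here I would exploit completeness. Since $T$ is an isometry from the complete metric space $(E,d_\varphi)$ (hypothesis~\ref{item:conj2}) into $(L^0(\mu),d)$, its image $T(E)$ is itself complete, hence closed in $(L^0(\mu),d)$. Moreover $T(E)\supseteq T(E_e)=T_{\aaz}(E_e)=L^\infty(\mu)$, and $L^\infty(\mu)$ is dense in $(L^0(\mu),d)$: for any $f\in L^0(\mu)$ the truncations $f_n:=(f\wedge n\bm{1})\vee(-n\bm{1})$ lie in $L^\infty(\mu)$ and satisfy $|f-f_n|\wedge\bm{1}\downarrow 0$ pointwise, so $d(f_n,f)=\mu(|f-f_n|\wedge\bm{1})\to 0$ by dominated convergence. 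A closed set containing a dense set is the whole space, whence $T(E)=L^0(\mu)$.

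The main obstacle is surjectivity, and the key realization is that the isometry converts the abstract completeness assumption~\ref{item:conj2} into closedness of the image, after which density of $L^\infty(\mu)$ in $L^0(\mu)$ finishes the job. By contrast, the lattice-homomorphism step is routine once one notices that solidity of both $\tau_\varphi$ and the topology of convergence in measure makes truncation and the positive-part map continuous, so that the algebraic identity available for $T_{\aaz}$ on $E_e$ survives the passage to the limit.
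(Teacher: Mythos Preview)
Your proof is correct and, for surjectivity, is essentially the same argument as the paper's (completeness of $(E,d_\varphi)$ plus density of $L^\infty(\mu)$ in $(L^0(\mu),d)$), just packaged topologically as ``closed set containing a dense set'' rather than as an explicit construction of a preimage.

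The genuine difference lies in how the lattice-homomorphism property is obtained. You prove $T(x^+)=T(x)^+$ directly, by approximating $x$ from $E_e$, using the nonexpansiveness of $(\cdot)^+$ for both metrics, and passing the identity $T_{\aaz}(x_n^+)=T_{\aaz}(x_n)^+$ to the limit. The paper instead postpones this step: it first shows surjectivity by building, for each $f\ge 0$, a \emph{positive} preimage $x\in E^+$ (the $\tau_\varphi$-limit of $T_{\aaz}^{-1}(f\wedge n\bm 1)$), so that $T^{-1}$ is positive, and then invokes the standard fact that a linear bijection which is positive together with its inverse is a lattice isomorphism. Your route is more hands-on and self-contained; the paper's is slightly shorter once one is willing to cite that standard fact, and it avoids the separate continuity argument for $(\cdot)^+$.
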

\begin{proof}
Fix $0\le f \in L^0(\mu)$. Since the constant function $\bm{1}$ is a weak order unit, then $f_n \uparrow f$, where $f_n:=f\wedge n\bm{1}$ for each positive integer $n$. Again, since puntual convergence implies convergence in measure, we get $f_n \to f$ in $(L^0(\mu),d)$. In particular, $(f_n)$ is a Cauchy sequence. 

At this point, define $x_n:=T_{\aaz}^{-1}(f_n)$ for each $n$. Note that $(x_n)$ is a sequence of positive vectors in $E_e$ and, thanks to \eqref{eq:isometry}, is a Cauchy sequence in $(E,\tau_\varphi)$. Since the metric $d_\varphi$ is complete by hypothesis, there exists $x \in E^+$ for which $x_n \to x$. According to \eqref{eq:defT}, we conclude that
$$
T(x)=\lim_{n\to \infty}T_{\aaz}(x_n)=\lim_{n\to \infty} f_n=f,
$$
i.e., $f \in T(E)$. Then, by the arbitrariness of $f$, $T$ is onto, i.e., $T(E)=L^0(\mu)$.

To sum up, $T:E \to L^0(\mu)$ is a one-to-one and onto linear operator such that $T$ and $T^{-1}$ are both positive operators. Therefore, thanks to \cite[Exercise 16]{MR2011364}, $T$ is an onto lattice isomorphism.
\end{proof}

\bigskip

At this point, note that, if one of the equivalent conditions \ref{item:conj1}-\ref{item:conj3} hold, then $E_e$ is lattice isomorphic onto $L^\infty(\mu)$, thanks to \eqref{eq:linfty}. 

Also, the metrics $d_\varphi$ and $d_\psi$ are topologically equivalent: indeed, a laterally complete Riesz space admits at most one Hausdorff Fatou topology, which must be necessarily a Lebesgue topology (i.e., $x_\alpha \overset{\tau}{\to}0$ whenever $x_\alpha \downarrow 0$), see e.g. \cite[Theorem 7.53]{MR2011364}. 

Finally, suppose that $0\le x_\alpha \uparrow x$ in $E$, hence by the Lebesgue property $x-x_\alpha \overset{\tau}{\to}{0}$, i.e., $\varphi((x-x_\alpha) \wedge e) \to 0$. Then, there exists a subsequence $(x_{\alpha_n})$ of the net $(x_\alpha)$ such that $\varphi((x-x_{\alpha_n}) \wedge e) \to 0$ as $n\to \infty$, i.e., $x-x_{\alpha_n} \overset{\tau}{\to}{0}$. Considering that $x-x_{\alpha_n}$ is a decreasing sequence, we conclude that $x-x_{\alpha_n} \downarrow 0$ by 
\cite[Theorem 2.21.(c)]{MR2011364}, that is, $x_{\alpha_n}\uparrow x$. This means that $E$ has the countable sup property.
\end{proof}

Let us conclude with the proof of the last corollary.
\begin{proof}[Proof of Corollary \ref{cor:falgebra}]
If $E$ is lattice and algebra isomorphic onto $L^0(\mu)$, for some probability measure space $(X,\Sigma,\mu)$, then it is easy to check that $E$ is an $f$-algebra of $L^0$ type (we omit details).

Conversely, let us suppose that $E$ is an $f$-algebra of $L^0$ type. Then, in particular, $E$ is a Dedekind complete Riesz space with weak order unit $e>0$ and admits a strictly positive order continuous linear functional $\varphi: E_e \to \mathbf{R}$ such that the metric $d_\varphi$ defined in \eqref{eq:dvarphi} is complete. It follows by Theorem \ref{mainconjecture} that there exists a lattice isomorphism $T:E\to L^0(\mu)$, for some probability measure space $(X,\Sigma,\mu)$. 

Then, we have to prove that $T$ is also an algebra isomorphism. Note that the multiplication $\cdot$ defined by 
$$
x\cdot y:=T^{-1}(T(x)T(y))
$$
for all $x,y \in E$ makes $E$ an Archimedean $f$-algebra with multiplicative unit $e>0$. The claim follows by the fact that there exists at most one algebra multiplication on an Archimedean Riesz space $L$ that makes $L$ an Archimedean $f$-algebra with given unit, see e.g. \cite[Theorem 2.58]{MR2262133}.
\end{proof}

\subsection{Acknowledgements.} 
The authors are grateful to PRIN 2017 (grant \\  2017CY2NCA) for financial support, and to an anonymous reviewer for constructive comments which improved the overall presentation.


%
\bibliographystyle{amsplain}

\providecommand{\bysame}{\leavevmode\hbox to3em{\hrulefill}\thinspace}
\providecommand{\MR}{\relax\ifhmode\unskip\space\fi MR }
\providecommand{\MRhref}[2]{%
  \href{http://www.ams.org/mathscinet-getitem?mr=#1}{#2}
}
\providecommand{\href}[2]{#2}
\begin{thebibliography}{}

\end{thebibliography}


\begin{thebibliography}{10}

\bibitem{MR1331491}
Y.~A. Abramovich, C.~D. Aliprantis, and W.~R. Zame, \emph{A representation
  theorem for {R}iesz spaces and its applications to economics}, Econom. Theory
  \textbf{5} (1995), no.~3, 527--535. 

\bibitem{MR0350372}
C.~D. Aliprantis, \emph{On the completion of {H}ausdorff locally solid {R}iesz
  spaces}, Trans. Amer. Math. Soc. \textbf{196} (1974), 105--125. 

\bibitem{MR2011364}
C.~D. Aliprantis and O.~Burkinshaw, \emph{Locally solid {R}iesz spaces with
  applications to economics}, second ed., Mathematical Surveys and Monographs,
  vol. 105, American Mathematical Society, Providence, RI, 2003. 

\bibitem{MR2262133}
\bysame, \emph{Positive operators}, Springer, Dordrecht, 2006, Reprint of the
  1985 original.

\bibitem{MR3535749}
S.~Cerreia-Vioglio, M.~Kupper, F.~Maccheroni, M.~Marinacci, and N.~Vogelpoth,
  \emph{Conditional {$L_p$}-spaces and the duality of modules over
  {$f$}-algebras}, J. Math. Anal. Appl. \textbf{444} (2016), no.~2, 1045--1070.


\bibitem{MR3554767}
S.~Cerreia-Vioglio, F.~Maccheroni, and M.~Marinacci, \emph{Hilbert
  {$A$}-modules}, J. Math. Anal. Appl. \textbf{446} (2017), no.~1, 970--1017.


\bibitem{MR3870592}
\bysame, \emph{Orthogonal decompositions in {H}ilbert {$A$}-modules}, J. Math.
  Anal. Appl. \textbf{470} (2019), no.~2, 846--875. 

\bibitem{MR4340168}
A.~Doldi and M.~Frittelli, \emph{Conditional systemic risk measures}, SIAM J.
  Financial Math. \textbf{12} (2021), no.~4, 1459--1507. 

\bibitem{MR1814344}
J.~L. Doob, \emph{Classical potential theory and its probabilistic
  counterpart}, Classics in Mathematics, Springer-Verlag, Berlin, 2001, Reprint
  of the 1984 edition.

\bibitem{MR2968040}
D.~Filipovi\'{c}, M.~Kupper, and N.~Vogelpoth, \emph{Approaches to conditional
  risk}, SIAM J. Financial Math. \textbf{3} (2012), no.~1, 402--432.


\bibitem{MR1307578}
W.~Filter, \emph{Representations of {A}rchimedean {R}iesz spaces---a survey},
  Rocky Mountain J. Math. \textbf{24} (1994), no.~3, 771--851. 

\bibitem{MR0216272}
D.~H. Fremlin, \emph{Abstract {K}\"othe spaces. {II}}, Proc. Cambridge Philos.
  Soc. \textbf{63} (1967), 951--956. 

\bibitem{MR2459668}
\bysame, \emph{Measure theory. {V}ol. 3}, Torres Fremlin, Colchester, 2004,
  Measure algebras, Corrected second printing of the 2002 original.

\bibitem{MR2780777}
M.~Frittelli and M.~Maggis, \emph{Conditional certainty equivalent}, Int. J.
  Theor. Appl. Finance \textbf{14} (2011), no.~1, 41--59. 

\bibitem{MR3176684}
\bysame, \emph{Complete duality for quasiconvex dynamic risk measures on
  modules of the {$L^p$}-type}, Stat. Risk Model. \textbf{31} (2014), no.~1,
  103--128. 

\bibitem{MR3483745}
H.~Hoffmann, T.~Meyer-Brandis, and G.~Svindland, \emph{Risk-consistent
  conditional systemic risk measures}, Stochastic Process. Appl. \textbf{126}
  (2016), no.~7, 2014--2037. 

\bibitem{MR0004095}
S.~Kakutani, \emph{Concrete representation of abstract {$(L)$}-spaces and the
  mean ergodic theorem}, Ann. of Math. (2) \textbf{42} (1941), 523--537.


\bibitem{MR2122234}
W.-C. Kuo, C.~C.~A. Labuschagne, and B.~A. Watson, \emph{Conditional
  expectations on {R}iesz spaces}, J. Math. Anal. Appl. \textbf{303} (2005),
  no.~2, 509--521. 

\bibitem{MR892050}
I.~Labuda, \emph{Submeasures and locally solid topologies on {R}iesz spaces},
  Math. Z. \textbf{195} (1987), no.~2, 179--196. 

\bibitem{MR0212540}
W.~A.~J. Luxemburg and J.~J. Masterson, \emph{An extension of the concept of
  the order dual of a {R}iesz space}, Canad. J. Math. \textbf{19} (1967),
  488--498. 

\bibitem{MR0233177}
J.~J. Masterson, \emph{A characterization of the {R}iesz space of measurable
  functions}, Trans. Amer. Math. Soc. \textbf{135} (1969), 193--197.


\bibitem{MR0021661}
A.~G. Pinsker, \emph{On concrete representations of linear semi-ordered
  spaces}, C. R. (Doklady) Acad. Sci. URSS (N.S.) \textbf{55} (1947), 379--381.


\end{thebibliography}

\providecommand{\href}[2]{#2}

\end{document}